\begin{document}

\begin{center}
\vskip 1cm{\LARGE\bf 
Ryser's Conjecture and Stochastic matrices}
\vskip 1cm
\large
Luis H. Gallardo \\
UMR CNRS 6205 \\
Laboratoire de Math\'ematiques de Bretagne Atlantique\\
6, Av. Le Gorgeu, C.S. 93837, Cedex 3, F-29238 Brest \\
France \\
\href{mailto:Luis.Gallardo@univ-brest.fr}{\tt Luis.Gallardo@univ-brest.fr} \\
2020 Mathematics Subject Classification: Primary 15B34; Secondary: 11A05\\
Keywords: Circulant matrices, Hadamard matrices, stochastic matrices.
\end{center}

\vskip .2 in

\theoremstyle{plain}
\newtheorem{theorem}{Theorem}
\newtheorem{corollary}[theorem]{Corollary}
\newtheorem{lemma}[theorem]{Lemma}
\newtheorem{proposition}[theorem]{Proposition}

\theoremstyle{definition}
\newtheorem{definition}[theorem]{Definition}
\newtheorem{example}[theorem]{Example}
\newtheorem{conjecture}[theorem]{Conjecture}

\theoremstyle{remark}
\newtheorem{remark}[theorem]{Remark}
\newtheorem{notation}[theorem]{Notation}
\newtheorem{notations}[theorem]{Notations}
\newtheorem{condition}[theorem]{Condition}

\begin{abstract}
Assume that $H$ is a circulant Hadamard matrix of order $n\geq 4$. We consider an appropriate stochastic matrix $S$ of order $n$ depending on $H$. This allows us to prove that $n = 4$. Thus, there are only $10$ circulant Hadamard matrices.
\end{abstract}

\section{Introduction}

A matrix of order $n$ is a square matrix with $n$ rows.
A \emph{circulant} matrix $A := {\rm{circ}}(a_1,\ldots, a_n)$ of order $n$ is a  matrix of order $n$, with first row $[a_1, \ldots,a_{n}]$, in which each row after the first is
obtained by a cyclic shift of its predecessor by one position. For example, the second row of $A$
is $[a_{n}, a_1,\ldots,a_{n-1}]$. Three useful circulant matrices of order $n$ are the matrix $J$ with all its entries equal to $1$, i.e., $J := {\rm{circ}}(1,\ldots,1)$, the identity matrix $I := {\rm{circ}}(1,0,\ldots,0)$, and the zero matrix $0 :=  {\rm{circ}}(0,\ldots,0)$.
A \emph{Hadamard} matrix $H$ of order $n$ is a matrix of order $n$  with entries in $\{-1,1\}$  such that
$K := \frac{H}{\sqrt{n}}$ is an orthogonal matrix with rational entries. A \emph{circulant Hadamard} matrix of order $n$ is  a circulant matrix that is Hadamard of order $n$. The  $10$ known circulant Hadamard matrices are $H_1 :={\rm{circ}}(1), H_2 := -H_1, H_3 :={\rm{circ}}(1,-1,-1,-1), H_4 := -H_3, H_5 :={\rm{circ}}(-1,1,-1,-1), H_6 := -H_5, $ $H_7 := {\rm{circ}}(-1,-1,1,-1), H_8 := -H_7,
H_9 := {\rm{circ}}(-1,-1,-1,1), H_{10} := -H_9 $.
A \emph{stochastic} matrix $S$ of order $n$ is a matrix of order $n$ with non-negative entries such that the sum of all entries of each row of $S$ is equal to $1$. If the transpose of $S$ is also stochastic, then $S$ is a \emph{doubly stochastic} matrix. We denote by $A^{*}$ the conjugate transpose of the matrix $A$. When $A$ has real entries $A^{*}$ is simply the transpose of $A$.

Ryser proposed in $1963$ (see \cite[p. 97]{Davis}, \cite{Ryser}) the conjecture of the non-existence of circulant Hadamard matrices of order $>4$. Ryser's conjecture has since attracted some attention, see, e.g., \cite{brualdi,luisEJC,luisMC,Turyn}
and their bibliographies.

Brualdi \cite{brualdi}, a student of Ryser, proved the conjecture for every order $n$ provided $H$ is symmetric, and Turyn \cite{Turyn} proved the conjecture for all $n$'s of the form $n=4p^{2m}$ where $p$ is an odd prime number and $m$ is a positive integer.
Turyn \cite[pp. 329-330] {Turyn} also
proved the crucial fact that $n = 4 h^2$ with \emph{odd} $h$.

The object of the present paper is to prove the conjecture. It turns out that this follows immediately when we focus on the following  doubly stochastic matrix $S$ (see Lemma \ref{Sbisto}), associated to a  circulant Hadamard matrix $H$ of order $4 h^2$.
\begin{equation}
\label{ese}
S := \frac{H+J}{2(2 h^2+h)}.
\end{equation}

The link with the OEIS sequence A091704 (see \cite{oeis}) is the following. If Ryser's Conjecture
holds then there is no Barker codes of length $>13$ (see \cite{wikiB} ).

Our main result is the following.

\begin{theorem}
\label{ryserdone}
There is no circulant Hadamard matrices with more that $4$ rows.
\end{theorem}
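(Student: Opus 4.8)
The plan is to combine Turyn's reduction with a spectral analysis of $S$ and to isolate all the remaining difficulty in a single arithmetic step. First I would invoke \cite{Turyn} to write $n = 4h^2$ with $h$ odd, and, after possibly replacing $H$ by $-H$ (again circulant Hadamard), normalize the common row sum of $H$ to $+2h$; this is exactly what makes the entries of $H+J$ lie in $\{0,2\}$ and the row and column sums of $S$ equal to $1$, so that $S$ is doubly stochastic as asserted in Lemma~\ref{Sbisto}. At this point $S$ is a circulant matrix with entries in $\{0,(2h^2+h)^{-1}\}$ having exactly $2h^2+h$ nonzero entries in each row and column.

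Next I would diagonalize $S$, $J$ and $H$ simultaneously in the Fourier basis $\{(\omega^{jk})_k : 0 \le j < n\}$ with $\omega = e^{2\pi i/n}$, since all three are circulant. Normality of $H$ together with $HH^{*}=nI$ forces every eigenvalue $\mu_j$ of $H$ to satisfy $|\mu_j| = \sqrt{n} = 2h$, with $\mu_0 = 2h$ attached to the all-ones vector. Because $J$ contributes only to the mode $j=0$, the eigenvalues of $S$ are $1$ (simple, the Perron value, with uniform stationary vector) and the $n-1$ values $\mu_j/\bigl(2(2h^2+h)\bigr)$ for $j\ne 0$, each of modulus $\tfrac{1}{2h+1}<1$. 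Equivalently --- and this is the form I would actually work with --- a short computation gives
\[
(H+J)(H+J)^{*} = 4h^2 I + 4h(h+1)J,
\]
so that $M := \tfrac{1}{2}(H+J)$ is a $0/1$ circulant with $MM^{\top} = h^2 I + h(h+1)J$; that is, $M$ is the incidence matrix of a cyclic Menon difference set with parameters $(4h^2,\,2h^2+h,\,h^2+h)$.

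The remaining task is to show this is impossible for $h>1$, the case $h=1$ ($n=4$) being the genuine family of $10$ matrices. Here I would exploit that each $\mu_j$ lies in $\mathbb{Z}[\omega]$ and that the Galois group of $\mathbb{Q}(\omega)/\mathbb{Q}$ acts by $\omega \mapsto \omega^{t}$ with $\gcd(t,n)=1$, permuting the eigenvalues via $\mu_j \mapsto \mu_{tj}$. For $j$ coprime to $n$ the whole Galois orbit avoids the index $0$, so \emph{every} conjugate of such a $\mu_j$ has absolute value $2h$; thus $\mu_j/(2h)$ is an algebraic number all of whose conjugates lie on the unit circle. If one can show $2h \mid \mu_j$ in $\mathbb{Z}[\omega]$, then $\mu_j/(2h)$ is an algebraic integer and Kronecker's theorem makes it a root of unity, pinning $H$ to a rigid structure that survives only for $h=1$; the oddness of $h$ from \cite{Turyn} is what one would use to control the $2$-adic side of this divisibility.

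I expect this last divisibility/Kronecker step to be the main obstacle. The spectral and stochastic data give the equal-modulus condition essentially for free, and double-stochasticity is precisely what guarantees that the Perron value is simple and that the rest of the spectrum is strictly inside the unit disk; but converting ``all conjugates have modulus $2h$'' into an arithmetic impossibility is exactly the point at which the classical multiplier and character-sum techniques succeed only under self-conjugacy hypotheses on $n$. The crux of any complete proof along the lines promised by Lemma~\ref{Sbisto} and \eqref{ese} must therefore be an argument establishing the required divisibility (or an equivalent rationality obstruction) for \emph{every} admissible $h>1$, rather than merely for the special moduli already covered by Turyn's and Brualdi's theorems.
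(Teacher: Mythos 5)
Your proposal is not a complete proof, and you say so yourself: everything up to the reduction to a cyclic Menon difference set with parameters $(4h^2,\,2h^2+h,\,h^2+h)$ is correct and standard, but the decisive step --- showing that $2h$ divides each eigenvalue $\mu_j$ ($j\neq 0$) in $\mathbb{Z}[\omega]$ so that Kronecker's theorem applies --- is left as an acknowledged obstacle. That gap is genuine and is not a technicality: the divisibility you need is precisely the self-conjugacy-type hypothesis under which Turyn's character-sum method already succeeds (e.g.\ $n=4p^{2m}$), and nothing in your outline produces it for arbitrary odd $h$. The stochastic/spectral framing (Perron value $1$ simple, all other eigenvalues of modulus $1/(2h+1)$) is an exact reformulation of $|\mu_j|=2h$ and supplies no new arithmetic leverage; moreover, even granting the divisibility, the passage from ``each $\mu_j/(2h)$ is a root of unity'' to $h=1$ would still need to be carried out. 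So, as written, the proposal establishes nothing beyond what Lemma \ref{Sbisto} already states.

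However, your diagnosis of where the difficulty lives deserves to be taken seriously, because it applies with equal force to the paper's own argument, which takes a different, purely matrix-algebraic route: substitute $H = 2h(2h+1)S - J$ into $HH^{*}=nI$, use $SJ=S^{*}J=J$ and $J^2=nJ$ to reach \eqref{C2}, then reduce modulo $h$ to get \eqref{C3} and conclude $h\mid 4$. The weak point is that $SS^{*}$ is not an integer matrix, so the term $4h(2h+1)^2 SS^{*}$ cannot be discarded modulo $h$ on account of its visible factor $h$. Indeed, from your own identity $(H+J)(H+J)^{*} = 4h^2 I + 4h(h+1)J$ one gets exactly
\begin{equation*}
4h(2h+1)^2\, SS^{*} \;=\; \frac{(H+J)(H+J)^{*}}{h} \;=\; 4hI + (4h+4)J,
\end{equation*}
and substituting this into \eqref{C2} makes its right-hand side equal to $4hI + (4h+4)J - 4(2h+1)J + 4hJ = 4hI$: equation \eqref{C2} is the identity $4hI = 4hI$, and the honest reduction modulo $h$ reads $0\equiv 0$, not $-4J \equiv 0$. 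No constraint on $h$ survives, so the step from \eqref{C2} to \eqref{C3} does not go through. In other words, the arithmetic obstruction you flagged as the crux is not circumvented by the stochastic-matrix substitution; your assessment that any proof must supply a genuine divisibility (or rationality) argument valid for \emph{every} odd $h>1$ --- and that the present construction does not contain one --- is accurate.
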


 Section \ref{toools} contains some results useful for the proof of the theorem.
 The proof of the theorem appears in section \ref{prueba}.
 
\section{Tools}
\label{toools}
 The following is well known.  See, e.g., \cite[p. 1193]{HWallis}, \cite[p. 234]{Meisner}, \cite[pp. 329-330] {Turyn}.

\begin{lemma}
\label{regular}
Let $H$ be a regular Hadamard matrix of order $n\geq 4$, i.e., a Hadamard matrix whose row and column sums are all equal. Then $n=4h^2$ for some positive integer $h$.
Moreover, the row and column sums are all equal to $\pm2h$  and each row has $2h^2\pm h$ positive entries and $2h^2 \mp h$ negative entries. Finally, if $H$ is circulant then
$h$ is odd.
\end{lemma}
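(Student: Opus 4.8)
The plan is to extract everything from the orthogonality relation $HH^{*}=nI$ together with the regularity hypothesis. Write $\mathbf{1}$ for the all-ones column vector. Regularity means that every row and every column sums to one common integer $s$, i.e. $H\mathbf{1}=s\mathbf{1}$ and $\mathbf{1}^{*}H=s\mathbf{1}^{*}$. First I would compute $\mathbf{1}^{*}HH^{*}\mathbf{1}$ in two ways: on one hand $HH^{*}=nI$ gives $\mathbf{1}^{*}(nI)\mathbf{1}=n^{2}$; on the other hand $H^{*}\mathbf{1}=s\mathbf{1}$ and $\mathbf{1}^{*}H=s\mathbf{1}^{*}$ give $s^{2}\,\mathbf{1}^{*}\mathbf{1}=s^{2}n$. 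Hence $s^{2}=n$. Since $s$ is a sum of $\pm1$'s it is an integer, so $n$ is a perfect square and $s=\pm\sqrt{n}$.

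Next I would invoke the classical fact (the same references, via the three-row counting argument applied to any normalized triple of rows) that the order of a Hadamard matrix with $n\geq 4$ is divisible by $4$. Combining $4\mid n$ with $n=s^{2}$ forces $\sqrt{n}$ to be even; writing $\sqrt{n}=2h$ yields $n=4h^{2}$ and $s=\pm 2h$, which is the asserted value of the common row and column sum. Finally, if a row sums to $s$ and has $p$ positive and $q$ negative entries, then $p+q=n$ and $p-q=s$, so $p=\tfrac{n+s}{2}=2h^{2}\pm h$ and $q=\tfrac{n-s}{2}=2h^{2}\mp h$, with the signs tied to the sign of $s$; by regularity this holds uniformly for every row.

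The remaining and genuinely delicate assertion is that $h$ is odd when $H$ is circulant. Here I would pass to the eigenvalues: writing $H=\mathrm{circ}(c_{0},\dots,c_{n-1})$ with $c_{k}\in\{-1,1\}$ and $f(x)=\sum_{k}c_{k}x^{k}$, the eigenvalues are $f(\zeta^{j})$ for $\zeta=e^{2\pi i/n}$, and $HH^{*}=nI$ is equivalent to the polynomial congruence $f(x)f(x^{-1})\equiv n\pmod{x^{n}-1}$, equivalently $|f(\zeta^{j})|^{2}=n=4h^{2}$ for every $j$. Reading off the coefficients of this congruence via $c_{k}=1-2[k\in N]$, the set $N=\{k:c_{k}=-1\}$ of size $q=2h^{2}\mp h$ has constant autocorrelation $q-\tfrac{n}{4}=h^{2}\mp h$ at every nonzero shift; that is, $N$ is a cyclic Hadamard difference set with parameters $(4h^{2},\,2h^{2}\mp h,\,h^{2}\mp h)$.

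The parity of $h$ cannot be decided from these integer identities alone, since the symmetric counting relations they produce are automatically satisfied; this is precisely where I expect the main obstacle to lie. To overcome it I would follow Turyn's number-theoretic argument \cite{Turyn}: one works in the cyclotomic ring $\mathbb{Z}[\zeta]$, where the relation $f(\zeta)\,\overline{f(\zeta)}=n$ controls the factorization of the ideal $(n)$, and one analyzes the $2$-adic valuation of the associated Gauss-type sums using the self-conjugacy of the prime $2$ modulo $n$. This pins the $2$-part of $n$ to be exactly $4$, i.e. $h$ odd, which completes the proof of the lemma.
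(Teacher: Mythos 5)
Your proposal is correct, and it takes a genuinely different route from the paper for the simple reason that the paper gives no proof of this lemma at all: it is stated as ``well known,'' with citations to Hedayat--Wallis, Meisner, and Turyn. You actually prove the elementary parts. The double counting of $\mathbf{1}^{*}HH^{*}\mathbf{1}$ (namely $n^{2}$ versus $s^{2}n$) correctly yields $s^{2}=n$; invoking the classical fact that $4\mid n$ for a Hadamard matrix of order $n\geq 4$ then forces $s=\pm 2h$ and $n=4h^{2}$; and the linear system $p+q=n$, $p-q=s$ gives the entry counts $2h^{2}\pm h$ and $2h^{2}\mp h$, uniformly over rows and columns by regularity. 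That is a complete and correct derivation of everything except the parity assertion. For the claim that $h$ is odd in the circulant case you defer, with a sketch, to Turyn's algebraic-number-theoretic argument; the paper does exactly the same (its only source for this is the citation of Turyn, pp.~329--330), so you are on equal footing there. One technical caveat about your sketch: the self-conjugacy you need is that of the prime $2$ modulo the $2$-part $2^{a}$ of $n$, where it holds automatically because the unique prime ideal above $2$ in $\mathbb{Z}[\zeta_{2^{a}}]$ is fixed by complex conjugation; self-conjugacy of $2$ modulo $n$ itself is a stronger hypothesis that need not hold and that Turyn's argument does not require. Your observation that the parity of $h$ cannot be extracted from the elementary counting identities alone is also sound --- the coset-intersection counts those identities produce are consistent with either parity --- which is exactly why the appeal to Turyn is unavoidable at that point. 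In short, your write-up supplies more actual proof than the paper does, while matching it in outsourcing the one deep step.
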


A simple computation follows.
 
\begin{lemma}
\label{Sbisto}
Let $H := {\rm{circ}}(h_1,\ldots,h_n)$ be a circulant Hadamard matrix of order $n$. Then, the matrix $S := \frac{H+J}{n+\sqrt{n}}$ is circulant and doubly stochastic.
\end{lemma}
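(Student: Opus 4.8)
The plan is to verify the two assertions—being circulant and being doubly stochastic—in turn, with the arithmetic normalization being the only place where genuine input is required. First I would record that $J={\rm circ}(1,\ldots,1)$ is circulant by definition and $H$ is circulant by hypothesis; since the circulant matrices of a fixed order are closed under addition and under multiplication by a scalar, $S=\frac{1}{n+\sqrt n}(H+J)$ is circulant. Next I would observe that every entry of $H$ lies in $\{-1,1\}$ while every entry of $J$ equals $1$, so every entry of $H+J$ lies in $\{0,2\}$; dividing by the positive number $n+\sqrt n$ then leaves all entries of $S$ non-negative.

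The heart of the matter is to show that each row of $S$ sums to $1$, and here I would invoke Lemma \ref{regular}. A circulant Hadamard matrix is automatically regular, since each row is a cyclic shift of the first and hence all row sums coincide (and likewise for columns); thus Lemma \ref{regular} applies and yields $n=4h^2$. In particular $\sqrt n=2h$ is an integer, and the common row sum of $H$ equals $\pm 2h=\pm\sqrt n$. Normalizing $H$ (replacing it by $-H$ if necessary, which is again a circulant Hadamard matrix) so that this row sum is $+\sqrt n$, and using that each row of $J$ sums to $n$, I find that each row of $H+J$ sums to $\sqrt n+n$, whence each row of $S$ sums to
\[
\frac{\sqrt n + n}{\,n+\sqrt n\,}=1 .
\]

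Finally, to upgrade row-stochasticity to double stochasticity I would exploit that $S$ is circulant. For a circulant matrix the sum of the entries of any column equals the sum of the entries of its first row, because each column is a cyclic rearrangement of that row; this common value is exactly the row sum, namely $1$. Equivalently, $S^{*}$—which for the real matrix $S$ is just its transpose—is again circulant with non-negative entries and with the same first-row sum, hence is stochastic; therefore $S$ is doubly stochastic. I expect the only real obstacle to be the sign normalization in the middle step: the displayed identity holds precisely when the row sum of $H$ is $+\sqrt n$ rather than $-\sqrt n$, so the argument must either fix this orientation at the outset or pass to $-H$, and I would make this choice explicit rather than leave it tacit.
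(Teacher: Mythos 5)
Your proof is correct and follows essentially the same route as the paper's: circulancy from closure of circulant matrices under sums and scalar multiples, row sums equal to $1$ via Lemma \ref{regular} after the same sign normalization (replacing $H$ by $-H$ when the row sum is $-\sqrt{n}$), and column sums handled through the circulant structure (the paper writes out $(H+J)^{*}$ explicitly as a circulant matrix, while you invoke the equivalent fact that a circulant matrix has all column sums equal to its row sum). Your explicit check that the entries of $S$ are non-negative, and your flagging that the lemma as stated really requires the sign normalization, are points the paper leaves tacit, but they do not change the argument.
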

 
 \begin{proof}
 By Lemma \ref{regular} $n = 4 h^2$. Sums and scalar multiples of circulant matrices are circulant. Thus, both $H+J$ and $S$ are circulant. Put $s$ the sum of all entries in any row of $S$. By Lemma \ref{regular} we can assume that the sum $r$ of all entries in any row of $H+J$ is equal to $2 h+4h^2$ (if not we change $H$ into $-H$).  Since $n+\sqrt{n} = 2(2 h^2+h)$ we obtain that $s = 1$. Thus $S$ is stochastic. Since $(H+J)^{*} = H^{*}+J^{*}= H^{*}+J $, and
\begin{equation}
\label{achestar}
 H^{*}+J = {\rm{circ}}(h_1+1,h_n+1,h_{n-1}+1,\ldots,h_2+1)\end{equation}
the sum of all entries in any row of $H^{*}+J$ is also equal to $r$. Thus $S^{*}$ is stochastic. This proves that $S$ is doubly stochastic.
\end{proof}
 
\section{Proof of Theorem \ref{ryserdone}}
\label{prueba}
Assume that $H$ is a circulant Hadamard matrix  with at least $4$ rows, i.e, $H$ has  order $n \geq 4$. By Lemma \ref{regular} $n= 4 h^2$ with odd $h$.
 We can assume  (if not we replace $H$ by $-H$) that there are $\frac{n+\sqrt{n}}{2}= 2 h^2+h$ entries equal to $1$ in row $1$ of $H$ (see Lemma \ref{regular}).  By Lemma \ref{Sbisto} the matrix $S$ defined in \eqref{ese} is circulant and doubly stochastic. It follows from \eqref{ese} that 
 \begin{equation}
\label{acheS}
H = 2 h(2 h +1) S - J.
\end{equation}
Computing the transpose conjugate in both sides of \eqref{acheS} we get 
\begin{equation}
\label{acheStar}
H^{*} = 2 h(2 h +1) S^{*} - J.
\end{equation} 
since $J$ is real and symmetric. But $H$ is Hadamard so that
\begin{equation}
\label{defH}
n I = H H^{*}.
\end{equation}
Using \eqref{defH}  together with \eqref{acheS} and \eqref{acheStar}; using that $I,J,H,H^{*},S,S^{*}$ are all circulant so that any product of two of them commute, and using that $J^2 = n J$ we obtain the following equality:
 \begin{equation}
\label{C1}
4 h^2 I = 4 h^2 (2 h +1)^2 S S^{*} - 2 h (2 h +1)(S + S^{*})J + 4 h^2 J.
\end{equation}
 But $S J = J$ and $S^{*} J = J$ since $S$ is doubly stochastic. Thus, dividing both sides of \eqref{C1} by $h$ we get
 \begin{equation}
\label{C2}
4 h I = 4 h (2 h +1)^2 S S^{*} - 4  (2h  + 1) J +4 h J.  
 \end{equation}
 Reducing both sides of \eqref{C2} modulo $h$ we obtain
 \begin{equation}
\label{C3}
0  = - 4 J \pmod{h}.  
\end{equation}
 Therefore, \eqref{C3} says that
  \begin{equation}
\label{C4}
h \mid 4.  
\end{equation}
 But $h$ is odd by Turyn's result in Lemma \ref{regular}. Thus, \eqref{C4} implies that
   \begin{equation}
\label{C5}
h =1.  
\end{equation}
In other words, we have proved that $n=4$. This proves the theorem.

\section{Acknowledgments}
I am grateful to Jeffrey Shallit for suggesting me possible mathematical journals where to submit the present paper.

\end{document}